\newtheorem{theorem}{Theorem}
\theoremstyle{plain}
\newtheorem{case}{Case}
\newtheorem{corollary}{Corollary}
\newtheorem{definition}{Definition}
\newtheorem{lemma}{Lemma}
\newtheorem{proposition}{Proposition}
\numberwithin{equation}{section}
\begin{document}
\title[]{NEW INEQUALITIES FOR Hermite-Hadamard AND SIMPSON TYPE AND
APPLICATIONS}
\author{M.E. \"{O}ZDEM\.{I}R}
\address{ATAT\"{U}RK UNIVERSITY, K. K. EDUCATION FACULTY, DEPARTMENT OF
MATHEMATICS, 25240, CAMPUS, ERZURUM, TURKEY}
\email{emos@atauni.edu.tr}
\email{yildizcetiin@yahoo.com}
\author{\c{C}ET\.{I}N YILDIZ$^{\clubsuit }$}
\thanks{$^{\clubsuit }$Corresponding Author.}
\subjclass[2000]{ 26A51; 26D10, 26D15}
\keywords{$P$-convex functions, Hermite-Hadamard inequality, Simpson
inequality,Power mean inequality. }

\begin{abstract}
In this paper, we obtain new bounds for the inequalities of Simpson and
Hermite-Hadamard type for functions whose second derivatives absolute values
are $P$-convex. These bounds can be much better than some obtained bounds.
Some applications for special means of real numbers are also given.
\end{abstract}

\maketitle

\section{INTRODUCTION}

Let $f:I\subset 
\mathbb{R}
\rightarrow 
\mathbb{R}
$ be a convex function defined on the interval $I$ of real numbers and a,b$%
\in I$, with $a<b$. The following inequality, known as the Hermite-Hadamard
inequality for convex functions, holds:%
\begin{equation}
f\left( \frac{a+b}{2}\right) \leq \frac{1}{b-a}\int_{a}^{b}f(x)dx\leq \frac{%
f(a)+f(b)}{2}.  \label{18}
\end{equation}

Since the inequalities in (\ref{18}) have been also known as Hadamard's
inequalities. In this work, we shall call them the Hermite-Hadamard
inequalities or H-H inequalities, for simplicity.

In recent years many authors have established several inequalities connected
to H-H\textit{\ }inequality. For recent results, refinements, counterparts,
generalizations and new\textit{\ }H-H\textit{\ }and Simpson\textit{\ }type
inequalities see the papers \cite{ALOMARI}, \cite{ssD}, \cite{DRAGOMIR}, 
\cite{usK}, \cite{LIU}, \cite{SET}, \cite{yildiz} and \cite{gsy}.

The following inequality is well known in the literature as Simpson's
inequality.

Let $f:[a,b]\rightarrow 
\mathbb{R}
$ be a four times continuously differentiable mapping on $(a,b)$ and $%
\left\Vert f^{(4)}\right\Vert _{\infty }=\underset{x\in (a,b)}{\sup }%
\left\vert f^{(4)}(x)\right\vert <\infty .$ Then, the following inequality
holds:%
\begin{equation}
\left\vert \frac{1}{3}\left[ \frac{f(a)+f(b)}{2}+2f\left( \frac{a+b}{2}%
\right) \right] -\frac{1}{b-a}\int_{a}^{b}f(x)dx\right\vert \leq \frac{1}{%
2880}\left\Vert f^{(4)}\right\Vert _{\infty }(b-a)^{2}.  \label{19}
\end{equation}

In \cite{daI}, S.S. Dragomir et.al., defined following new class of
functions.

\begin{definition}
A function $f:I\subseteq 
\mathbb{R}
\rightarrow 
\mathbb{R}
$ is $P-$function or that f belongs to the class of $P(I)$, if it is
nonnegative and for all $x,y\in I$ and $\lambda \in \lbrack 0,1]$, satisfies
the following inequality;%
\begin{equation*}
f(\lambda x+(1-\lambda )y)\leq f(x)+f(y).
\end{equation*}%
$P(I)$ contain all nonnegative monotone convex and quasi convex functions.
\end{definition}

In \cite{ahmet}, Akdemir and \"{O}zdemir defined co-ordinaded $P$-functions
and proved some inequalities and in \cite{daI}, Dragomir \textit{et al.},
proved following inequalities of Hadamard's type for $P-$functions.

\begin{theorem}
Let $f\in P(I)$, $a,b\in I$, with $a<b$ and $f\in L_{1}[a,b]$. Then the
following inequality holds.%
\begin{equation*}
f\left( \frac{a+b}{2}\right) \leq \frac{2}{b-a}\int_{a}^{b}f(x)dx\leq 2\left[
f(a)+f(b)\right] .
\end{equation*}
\end{theorem}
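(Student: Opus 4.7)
The plan is to prove the two inequalities separately by exploiting the defining $P$-function inequality together with a substitution that parametrizes the segment $[a,b]$ as $\{ta+(1-t)b : t\in[0,1]\}$. The engine throughout is the change of variable $x=ta+(1-t)b$, which gives $\frac{1}{b-a}\int_a^b f(x)\,dx = \int_0^1 f(ta+(1-t)b)\,dt$, and also $\int_0^1 f((1-t)a+tb)\,dt$ (by $t\mapsto 1-t$), so the average of $f$ over $[a,b]$ can be written symmetrically in either parametrization.

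For the right-hand inequality I would simply apply the defining $P$-function inequality pointwise: for every $t\in[0,1]$, taking $x=a$, $y=b$, $\lambda=t$ yields $f(ta+(1-t)b)\leq f(a)+f(b)$. Integrating this in $t$ over $[0,1]$ and using the change of variables above gives $\frac{1}{b-a}\int_a^b f(x)\,dx \leq f(a)+f(b)$, which is exactly half of the desired bound; multiplying by $2$ delivers the claim.

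For the left-hand inequality the trick is to write the midpoint as an average of two symmetric points on the segment. For each $t\in[0,1]$, set $u=ta+(1-t)b$ and $v=(1-t)a+tb$; then $\tfrac{u+v}{2}=\tfrac{a+b}{2}$. Applying the $P$-function definition with $\lambda=\tfrac12$ gives
\begin{equation*}
f\!\left(\frac{a+b}{2}\right) \leq f(ta+(1-t)b) + f((1-t)a+tb).
\end{equation*}
Integrating in $t$ over $[0,1]$ and recognizing that both integrals on the right equal $\frac{1}{b-a}\int_a^b f(x)\,dx$ yields $f\!\left(\frac{a+b}{2}\right) \leq \frac{2}{b-a}\int_a^b f(x)\,dx$.

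There is no real obstacle here: the argument is a direct mimicry of the classical Hermite--Hadamard proof, with the $P$-function inequality $f(\lambda x+(1-\lambda)y)\leq f(x)+f(y)$ replacing the convexity inequality $f(\lambda x+(1-\lambda)y)\leq \lambda f(x)+(1-\lambda)f(y)$; the only thing to be careful about is that the $P$-bound is \emph{independent} of $\lambda$, which is precisely why the right-hand constant inflates from $\tfrac{f(a)+f(b)}{2}$ (convex case) to $2[f(a)+f(b)]$ and the left-hand factor from $1$ to $2$ after integration.
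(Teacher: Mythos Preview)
Your argument is correct in both halves: the right-hand bound follows by integrating the pointwise inequality $f(ta+(1-t)b)\le f(a)+f(b)$ over $t\in[0,1]$, and the left-hand bound follows from the symmetric-point trick $\frac{a+b}{2}=\frac12\bigl(ta+(1-t)b\bigr)+\frac12\bigl((1-t)a+tb\bigr)$ together with the $P$-function inequality at $\lambda=\tfrac12$, then integrating. There is nothing to repair.

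As for comparison with the paper: note that the paper does \emph{not} actually prove this theorem. It is quoted as a known result from Dragomir, Pe\v{c}ari\'c and Persson (reference~[7] in the paper), and no argument is supplied. Your proof is precisely the standard one that appears in that original source, so in effect you have reproduced the proof the authors chose to omit.
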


In \cite{selected}, Dragomir and Pearce have studied this type of
inequalities for twice differential function with bounded second derivative
and have obtained the following:

\begin{theorem}
Assume that $f:I\rightarrow R$ is continuous on $I$, twice differentiable on 
$I^{\circ }$ and there exist $k,K$ such that $k\leq f^{\prime \prime }\leq K$
on I. Then%
\begin{equation}
\frac{k}{3}\left( \frac{b-a}{2}\right) ^{2}\leq \frac{f(a)+f(b)}{2}-\frac{1}{%
b-a}\int_{a}^{b}f(x)dx\leq \frac{K}{3}\left( \frac{b-a}{2}\right) ^{2}.
\label{20}
\end{equation}
\end{theorem}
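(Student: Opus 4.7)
My plan is to reduce the statement to the classical Hermite-Hadamard inequality \eqref{18} by applying it to two cleverly chosen auxiliary functions whose convexity is forced by the hypothesis $k\le f''\le K$.

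First, I would introduce the auxiliary function $g(x)=f(x)-\frac{k}{2}x^{2}$. Since $g''(x)=f''(x)-k\ge 0$ on $I^{\circ}$, the function $g$ is convex, so the right-hand side of \eqref{18} yields
\begin{equation*}
\frac{1}{b-a}\int_{a}^{b}\Bigl[f(x)-\tfrac{k}{2}x^{2}\Bigr]\,dx \;\le\; \frac{f(a)+f(b)}{2}-\frac{k}{2}\cdot\frac{a^{2}+b^{2}}{2}.
\end{equation*}
Rearranging and using $\frac{1}{b-a}\int_{a}^{b}x^{2}\,dx=\frac{a^{2}+ab+b^{2}}{3}$, the inequality becomes
\begin{equation*}
\frac{f(a)+f(b)}{2}-\frac{1}{b-a}\int_{a}^{b}f(x)\,dx \;\ge\; \frac{k}{2}\left[\frac{a^{2}+b^{2}}{2}-\frac{a^{2}+ab+b^{2}}{3}\right].
\end{equation*}
An elementary simplification of the bracket gives $\frac{(b-a)^{2}}{6}$, which is precisely $\frac{2}{3}\bigl(\frac{b-a}{2}\bigr)^{2}$, producing the lower bound $\frac{k}{3}\bigl(\frac{b-a}{2}\bigr)^{2}$.

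For the upper bound I would run the symmetric argument with $h(x)=\frac{K}{2}x^{2}-f(x)$, which is convex because $h''(x)=K-f''(x)\ge 0$. Applying the trapezoidal half of \eqref{18} to $h$ and rearranging gives
\begin{equation*}
\frac{f(a)+f(b)}{2}-\frac{1}{b-a}\int_{a}^{b}f(x)\,dx \;\le\; \frac{K}{2}\left[\frac{a^{2}+b^{2}}{2}-\frac{a^{2}+ab+b^{2}}{3}\right]=\frac{K}{3}\left(\frac{b-a}{2}\right)^{2},
\end{equation*}
which closes the proof.

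There is no real obstacle here; the only step that requires any care is the arithmetic identity $\tfrac{a^{2}+b^{2}}{2}-\tfrac{a^{2}+ab+b^{2}}{3}=\tfrac{(b-a)^{2}}{6}$, which drops out by putting both fractions over a common denominator. The conceptual point, and the main idea worth emphasizing, is that the two-sided bound on $f''$ converts directly into the convexity of $f-\tfrac{k}{2}x^{2}$ and of $\tfrac{K}{2}x^{2}-f$, so that the trapezoidal half of Hermite-Hadamard can be invoked twice rather than needing a fresh integral-kernel computation.
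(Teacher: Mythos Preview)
Your argument is correct. The auxiliary-function trick of subtracting (respectively adding) a quadratic so as to force convexity, and then invoking the trapezoidal half of the Hermite--Hadamard inequality~\eqref{18}, is a clean and standard way to obtain both bounds; the arithmetic identity you flag is right, and the rearrangements go through exactly as you say.

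As for comparison: the paper does \emph{not} supply its own proof of this statement. It is quoted in the introduction as a known result from Dragomir and Pearce~\cite{selected}, and later (Corollary~5) the authors merely observe that their new bound specialises to a weak form of~\eqref{20} when $|f''|\le M$. So there is no competing argument in the paper to weigh yours against; your proof is self-contained, elementary, and fully adequate for the purpose.
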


In \cite{midpoint}, Cerone and Dragomir the following integral inequality
were obtained.

\begin{theorem}
Let $f:[a,b]\rightarrow R$ be a twice differentiable mapping and suppose
that $\gamma \leq f^{\prime \prime }\leq \Gamma $\ for all $t\in (a,b)$.
Then we have%
\begin{equation}
\frac{\gamma \left( b-a\right) ^{2}}{24}\leq \frac{1}{b-a}%
\int_{a}^{b}f(x)dx-f\left( \frac{a+b}{2}\right) \leq \frac{\Gamma \left(
b-a\right) ^{2}}{24}.  \label{21}
\end{equation}
\end{theorem}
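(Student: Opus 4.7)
My plan is to represent the midpoint error $\frac{1}{b-a}\int_a^b f(x)\,dx - f\!\left(\frac{a+b}{2}\right)$ as the integral of $f''$ against a nonnegative Peano kernel, and then sandwich the result using the two-sided hypothesis on $f''$. To that end I introduce the piecewise-quadratic kernel
\begin{equation*}
q(x) = \begin{cases} \dfrac{(x-a)^{2}}{2}, & x \in \left[a,\tfrac{a+b}{2}\right], \\[6pt] \dfrac{(x-b)^{2}}{2}, & x \in \left[\tfrac{a+b}{2},b\right], \end{cases}
\end{equation*}
which is nonnegative, continuous on $[a,b]$, vanishes at the endpoints, and attains the value $(b-a)^{2}/8$ at the midpoint.

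The heart of the argument is the identity
\begin{equation*}
\frac{1}{b-a}\int_a^b f(x)\,dx - f\!\left(\tfrac{a+b}{2}\right) = \frac{1}{b-a}\int_a^b q(x)\,f''(x)\,dx,
\end{equation*}
which I would obtain by two successive integrations by parts. Setting $p(x)=q'(x)$ piecewise, so that $p(x)=x-a$ on the left half and $p(x)=x-b$ on the right half, a first integration by parts on each subinterval produces
\begin{equation*}
\int_a^b p(x)\,f'(x)\,dx = (b-a)\,f\!\left(\tfrac{a+b}{2}\right) - \int_a^b f(x)\,dx,
\end{equation*}
the midpoint value arising from the jump of $p$ at $(a+b)/2$. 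A second integration by parts, performed separately on each half so that $q$ is smooth, yields $\int_a^b p(x)\,f'(x)\,dx = -\int_a^b q(x)\,f''(x)\,dx$; here the midpoint contributions from the two halves cancel thanks to the continuity of $q$, while the contributions at $a$ and $b$ vanish since $q(a)=q(b)=0$.

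Finally, I would use the sign of $q$ together with $\gamma \le f'' \le \Gamma$ to estimate
\begin{equation*}
\frac{\gamma}{b-a}\int_a^b q(x)\,dx \;\le\; \frac{1}{b-a}\int_a^b f(x)\,dx - f\!\left(\tfrac{a+b}{2}\right) \;\le\; \frac{\Gamma}{b-a}\int_a^b q(x)\,dx,
\end{equation*}
and a short calculation gives $\int_a^b q(x)\,dx = (b-a)^{3}/24$, which delivers the claimed bounds. The step I expect to require the most care is the second integration by parts: since $q$ is only piecewise smooth, one must split the integral at $(a+b)/2$, orient the boundary terms coming from the two halves correctly, and verify that they cancel exactly because of the continuity of $q$ at the midpoint; once that is done, the remainder of the argument is routine.
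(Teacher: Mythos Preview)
Your argument is correct: the Peano-kernel identity holds, the two integrations by parts behave exactly as you describe (the midpoint boundary terms cancel because $q$ is continuous there and $q(a)=q(b)=0$), and since $q\ge 0$ with $\int_a^b q=(b-a)^3/24$, the sandwich $\gamma\le f''\le\Gamma$ gives the stated two-sided bound.

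The paper, however, does not supply its own proof of this theorem; it is quoted in the Introduction as a known result of Cerone and Dragomir. The closest the paper comes is Lemma~1 (cited from Sar\i kaya--Aktan), which for $\lambda=0$ is exactly your kernel identity after the substitution $x=ta+(1-t)b$: the kernel $k(t)=\tfrac12 t^{2}$ on $[0,\tfrac12]$ and $k(t)=\tfrac12(1-t)^{2}$ on $[\tfrac12,1]$ is your $q$ in the $t$-variable. But the paper never exploits the nonnegativity of this kernel to obtain the two-sided estimate. Instead, Corollary~4 recovers only the symmetric absolute-value special case $|f''|\le M$, and even that through a detour via $P$-convexity of $|f''|^{q}$ and the power-mean inequality, reaching the constant $(b-a)^{2}/24$ only after the crude step $2^{1/q}\le 2$. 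Your direct route is both shorter and strictly stronger than what the paper actually carries out for this inequality.
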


In \cite{m.zeki}, Sar\i kaya \textit{et al.} established following Lemma for
twice differentiable mappings:

\begin{lemma}
\label{yildiz2}Let $I\subset 
\mathbb{R}
$ be an open interval, with $a<b$. If $f:I\rightarrow 
\mathbb{R}
$ is a twice differentiable mapping such that $f^{\prime \prime }$ is
integrable and $0\leq \lambda \leq 1.$ Then the following identity holds:%
\begin{equation*}
\left( \lambda -1\right) f\left( \frac{a+b}{2}\right) -\lambda \frac{%
f(a)+f(b)}{2}+\frac{1}{b-a}\int_{a}^{b}f(x)dx=(b-a)^{2}\int_{0}^{1}k(t)f^{%
\prime \prime }(ta+(1-t)b)dt
\end{equation*}%
where%
\begin{equation*}
k(t)=\left\{ 
\begin{tabular}{ll}
$\frac{1}{2}t(t-\lambda ),$ & $0\leq t\leq \frac{1}{2}$ \\ 
&  \\ 
$\frac{1}{2}(1-t)(1-\lambda -t),$ & $\frac{1}{2}\leq t\leq 1.$%
\end{tabular}%
\right.
\end{equation*}
\end{lemma}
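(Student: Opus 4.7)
The plan is to prove the identity directly by computing the right-hand side via integration by parts applied twice, splitting the integral at $t=1/2$ to handle the piecewise definition of $k(t)$. Denote the right-hand side (without the $(b-a)^2$ factor) by $J_1+J_2$, where $J_1=\int_0^{1/2}\frac{1}{2}t(t-\lambda)f''(ta+(1-t)b)\,dt$ and $J_2=\int_{1/2}^1\frac{1}{2}(1-t)(1-\lambda-t)f''(ta+(1-t)b)\,dt$. The antiderivative of $f''(ta+(1-t)b)$ is $\frac{1}{a-b}f'(ta+(1-t)b)$, which brings factors of $(a-b)^{-1}$ and, after a second integration by parts, $(a-b)^{-2}=(b-a)^{-2}$, matching the $(b-a)^2$ on the other side.

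First I would integrate by parts in each of $J_1,J_2$ with $u=k(t)$, $dv=f''(ta+(1-t)b)\,dt$. Since $k(0)=k(1)=0$, no endpoint contributions arise at $t=0$ or $t=1$; at $t=1/2$ one checks the two one-sided limits $k(1/2^{-})=k(1/2^{+})=\frac{1}{4}(\frac{1}{2}-\lambda)$ are equal, so the $f'((a+b)/2)$ boundary terms cancel between $J_1$ and $J_2$. What remains are $-\int_0^{1/2}\frac{k'(t)}{a-b}f'(ta+(1-t)b)\,dt$ and its analogue on $[1/2,1]$.

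Next I would integrate by parts a second time. Here $k''(t)=1$ on each subinterval (direct computation from $k(t)=\frac{1}{2}(t^2-\lambda t)$ and $k(t)=\frac{1}{2}(t^2-(2-\lambda)t+(1-\lambda))$), so the surviving integrals recombine into $\frac{1}{(a-b)^2}\int_0^1 f(ta+(1-t)b)\,dt=\frac{1}{(b-a)^3}\int_a^b f(x)\,dx$ after the substitution $x=ta+(1-t)b$. The boundary terms now involve $k'(0)=-\lambda/2$, $k'(1)=\lambda/2$, and the jump $k'(1/2^{+})-k'(1/2^{-})=-(1-\lambda)$. Assembling them produces
\[
-\tfrac{\lambda}{2}\bigl[f(a)+f(b)\bigr]\cdot\tfrac{1}{(b-a)^2}-(1-\lambda)f\!\left(\tfrac{a+b}{2}\right)\cdot\tfrac{1}{(b-a)^2}+\tfrac{1}{(b-a)^3}\int_a^b f(x)\,dx.
\]
Multiplying through by $(b-a)^2$ yields exactly $(\lambda-1)f\!\left(\tfrac{a+b}{2}\right)-\lambda\tfrac{f(a)+f(b)}{2}+\tfrac{1}{b-a}\int_a^b f(x)\,dx$.

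The main obstacle, and the only place where a careless sign can spoil the identity, is accounting for the boundary contributions at $t=1/2$: one must verify that $k$ itself is continuous there (so the $f'$ terms cancel) while $k'$ has a nonzero jump equal to $-(1-\lambda)$ (which is what supplies the $(\lambda-1)f((a+b)/2)$ coefficient on the left). Everything else is routine bookkeeping of signs between $(a-b)$ and $(b-a)$.
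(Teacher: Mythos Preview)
Your argument is correct: the double integration by parts on each subinterval, together with the observation that $k$ is continuous at $t=1/2$ while $k'$ jumps by $-(1-\lambda)$, is exactly how such identities are established, and your sign bookkeeping checks out (in particular $k'(1/2^{-})=(1-\lambda)/2$, $k'(1/2^{+})=-(1-\lambda)/2$, $k'(0)=-\lambda/2$, $k'(1)=\lambda/2$, and $k''\equiv 1$ on each piece).

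There is nothing to compare against, however: the paper does not prove this lemma at all. It is quoted from Sar\i kaya and Aktan \cite{m.zeki} and used as a black box in the proofs of Theorems~\ref{cetin} and~\ref{yildiz}. Your proof is precisely the standard derivation one finds in that source, so you have supplied what the present paper omits.
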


The main purpose of this paper is to point out new estimations of the (\ref%
{18}) and (\ref{19}) inequalities and to apply them in special means of the
real numbers.

\section{MAIN RESULTS}

Using Lemma \ref{yildiz2} equality we can obtain the following general
integral inequalities for $P-$convex functions.

\begin{theorem}
\label{cetin}Let $f:I\subset 
\mathbb{R}
\rightarrow 
\mathbb{R}
$ be a differentiable mapping on $I^{o}$($I^{o}$ is the interior of I), $%
a,b\in I$ with $a<b$. If $\left\vert f^{\prime \prime }\right\vert $ is $P-$%
convex function$,$ $0\leq \lambda \leq 1,$ then the following inequality
holds:%
\begin{eqnarray}
&&  \label{4} \\
&&\left\vert \left( \lambda -1\right) f\left( \frac{a+b}{2}\right) -\lambda 
\frac{f(a)+f(b)}{2}+\frac{1}{b-a}\int_{a}^{b}f(x)dx\right\vert \text{ \ \ \
\ \ }  \notag \\
&\leq &\left\{ 
\begin{tabular}{ll}
$\frac{(b-a)^{2}}{24}\left( 8\lambda ^{3}-3\lambda +1\right) \left\{
\left\vert f^{\prime \prime }(a)\right\vert +\left\vert f^{\prime \prime
}(b)\right\vert \right\} ,$ & $for$ $0\leq \lambda \leq \frac{1}{2}$ \\ 
&  \\ 
$\ \frac{(b-a)^{2}}{24}\left( 3\lambda -1\right) \left\{ \left\vert
f^{\prime \prime }(a)\right\vert +\left\vert f^{\prime \prime
}(b)\right\vert \right\} $ $\ \ \ \ \ \ ,$ & $for$ $\frac{1}{2}\leq \lambda
\leq 1.$%
\end{tabular}%
\right.  \notag
\end{eqnarray}
\end{theorem}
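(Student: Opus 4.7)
\medskip

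\noindent\textbf{Proof plan.} The strategy is the standard one for this family of results: start from the identity of Lemma \ref{yildiz2}, pass the absolute value inside the integral, exploit the $P$-convexity of $|f''|$, and reduce the whole problem to computing $\int_0^1 |k(t)|\,dt$ in each of the two ranges of $\lambda$.

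\medskip

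\noindent\textbf{Step 1: Set up.} From Lemma \ref{yildiz2}, the quantity on the left-hand side of \eqref{4} equals
\[
(b-a)^{2}\left|\int_{0}^{1}k(t)f''(ta+(1-t)b)\,dt\right|
\leq (b-a)^{2}\int_{0}^{1}|k(t)|\,\bigl|f''(ta+(1-t)b)\bigr|\,dt.
\]
Since $|f''|$ is $P$-convex on $I$, for every $t\in[0,1]$ we have
\[
\bigl|f''(ta+(1-t)b)\bigr|\leq |f''(a)|+|f''(b)|,
\]
so the bound collapses to
\[
(b-a)^{2}\bigl(|f''(a)|+|f''(b)|\bigr)\int_{0}^{1}|k(t)|\,dt.
\]
Thus everything reduces to evaluating $\int_{0}^{1}|k(t)|\,dt$.

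\medskip

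\noindent\textbf{Step 2: Sign analysis of $k(t)$.} On $[0,\tfrac12]$ the kernel is $k(t)=\tfrac12 t(t-\lambda)$, which changes sign at $t=\lambda$; on $[\tfrac12,1]$ it is $k(t)=\tfrac12(1-t)(1-\lambda-t)$, which changes sign at $t=1-\lambda$. The two regimes $0\leq\lambda\leq\tfrac12$ and $\tfrac12\leq\lambda\leq 1$ must therefore be handled separately. When $0\leq\lambda\leq\tfrac12$ one has $\lambda\in[0,\tfrac12]$ and $1-\lambda\in[\tfrac12,1]$, so $|k|$ must be integrated over the four subintervals $[0,\lambda]$, $[\lambda,\tfrac12]$, $[\tfrac12,1-\lambda]$, $[1-\lambda,1]$; when $\tfrac12\leq\lambda\leq 1$ the kernel is of one sign on each half and the computation collapses to just two pieces $[0,\tfrac12]$ and $[\tfrac12,1]$.

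\medskip

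\noindent\textbf{Step 3: The integrals.} The substitution $t\mapsto 1-t$ carries the piece on $[\tfrac12,1]$ onto the piece on $[0,\tfrac12]$, so in both regimes $\int_0^1 |k(t)|\,dt = 2\int_0^{1/2}\bigl|\tfrac12 t(t-\lambda)\bigr|\,dt$. For $0\leq\lambda\leq\tfrac12$ this gives
\[
\int_{0}^{1}|k(t)|\,dt
= \int_{0}^{\lambda} t(\lambda-t)\,dt + \int_{\lambda}^{1/2} t(t-\lambda)\,dt
= \frac{8\lambda^{3}-3\lambda+1}{24},
\]
while for $\tfrac12\leq\lambda\leq 1$ the kernel has constant sign on each half and
\[
\int_{0}^{1}|k(t)|\,dt = \int_{0}^{1/2} t(\lambda-t)\,dt = \frac{3\lambda-1}{24}.
\]
Multiplying by $(b-a)^{2}\bigl(|f''(a)|+|f''(b)|\bigr)$ yields the two bounds stated in \eqref{4}. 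A sanity check at $\lambda=\tfrac12$ shows both expressions equal $\tfrac{1}{48}$, so the piecewise formula is continuous at the join.

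\medskip

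\noindent\textbf{Main obstacle.} The only delicate step is the case analysis in Step 2: one has to correctly locate the zeros of $k(t)$ relative to the breakpoint $t=\tfrac12$ of the piecewise definition, and verify that in the range $\tfrac12\leq\lambda\leq 1$ the kernel does not change sign inside either half-interval. Once that is settled, all remaining work is the elementary polynomial integration in Step 3.
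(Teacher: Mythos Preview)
Your proof is correct and follows essentially the same route as the paper: apply Lemma~\ref{yildiz2}, bring the absolute value inside, use $P$-convexity to replace $|f''(ta+(1-t)b)|$ by $|f''(a)|+|f''(b)|$, and then evaluate $\int_0^1|k(t)|\,dt$ by splitting according to the sign of the kernel in each $\lambda$-regime. Your use of the symmetry $t\mapsto 1-t$ to collapse the computation to a single half-interval is a mild streamlining of the paper's version, which computes the two halves separately, but the argument is otherwise the same.
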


\begin{proof}
From Lemma 1, we have%
\begin{eqnarray}
&&  \label{1} \\
&&\left\vert \left( \lambda -1\right) f\left( \frac{a+b}{2}\right) -\lambda 
\frac{f(a)+f(b)}{2}+\frac{1}{b-a}\int_{a}^{b}f(x)dx\right\vert \text{ } 
\notag \\
&\leq &\frac{(b-a)^{2}}{2}\left[ \int_{0}^{\frac{1}{2}}\left\vert
t(t-\lambda )\right\vert \left\vert f^{\prime \prime }(ta+(1-t)b)\right\vert
dt\right.  \notag \\
&&\left. +\int_{\frac{1}{2}}^{1}\left\vert (1-t)(1-\lambda -t)\right\vert
\left\vert f^{\prime \prime }(ta+(1-t)b)\right\vert dt\right] .  \notag
\end{eqnarray}%
We assume that $0\leq \lambda \leq \frac{1}{2},$ then using the $P-$%
convexity of $\left\vert f^{\prime \prime }\right\vert ,$ we have \ \ \ \ \
\ \ \ \ \ \ \ \ \ \ \ \ \ \ \ \ \ \ \ \ \ \ \ \ \ \ \ \ \ \ \ \ \ \ \ \ \ \
\ \ \ \ \ \ \ \ \ \ \ \ \ \ \ \ 
\begin{eqnarray}
&&  \label{2} \\
&&\int_{0}^{\frac{1}{2}}\left\vert t(t-\lambda )\right\vert \left\vert
f^{\prime \prime }(ta+(1-t)b)\right\vert dt  \notag \\
&=&\int_{0}^{\lambda }t(\lambda -t)\left\vert f^{\prime \prime
}(ta+(1-t)b)\right\vert dt+\int_{\lambda }^{\frac{1}{2}}t(t-\lambda
)\left\vert f^{\prime \prime }(ta+(1-t)b)\right\vert dt  \notag \\
&\leq &\left\{ \left\vert f^{\prime \prime }(a)\right\vert +\left\vert
f^{\prime \prime }(b)\right\vert \right\} \left[ \int_{0}^{\lambda
}t(\lambda -t)dt+\int_{\lambda }^{\frac{1}{2}}t(t-\lambda )dt\right]  \notag
\\
&=&\left\{ \left\vert f^{\prime \prime }(a)\right\vert +\left\vert f^{\prime
\prime }(b)\right\vert \right\} \left( \frac{\lambda ^{3}}{3}-\frac{\lambda 
}{8}+\frac{1}{24}\right) .  \notag
\end{eqnarray}%
Similarly, we write%
\begin{eqnarray}
&&  \label{3} \\
&&  \notag \\
&&\int_{\frac{1}{2}}^{1}\left\vert (1-t)(1-\lambda -t)\right\vert \left\vert
f^{\prime \prime }(ta+(1-t)b)\right\vert dt  \notag \\
&=&\int_{\frac{1}{2}}^{1-\lambda }(1-t)(1-\lambda -t)\left\vert f^{\prime
\prime }(ta+(1-t)b)\right\vert dt  \notag \\
&&+\int_{1-\lambda }^{1}(1-t)(t+\lambda -1)\left\vert f^{\prime \prime
}(ta+(1-t)b)\right\vert dt  \notag \\
&\leq &\left\{ \left\vert f^{\prime \prime }(a)\right\vert +\left\vert
f^{\prime \prime }(b)\right\vert \right\} \left[ \int_{\frac{1}{2}%
}^{1-\lambda }(1-t)(1-\lambda -t)dt+\int_{1-\lambda }^{1}(1-t)(t+\lambda
-1)dt\right]  \notag \\
&=&\left\{ \left\vert f^{\prime \prime }(a)\right\vert +\left\vert f^{\prime
\prime }(b)\right\vert \right\} \left( \frac{2(1-\lambda )^{3}}{3}+\lambda
(1-\lambda )^{2}+\frac{7\lambda }{8}-\frac{5}{8}\right) .  \notag
\end{eqnarray}%
Using (\ref{2}) and (\ref{3}) in (\ref{1}), we see that first inequality of (%
\ref{4}) holds.

On the other hand, let $\frac{1}{2}\leq \lambda \leq 1,$ then, from $P-$%
convexity of $\left\vert f^{\prime \prime }\right\vert $ we have%
\begin{eqnarray*}
&&\int_{0}^{\frac{1}{2}}\left\vert t(t-\lambda )\right\vert \left\vert
f^{\prime \prime }(ta+(1-t)b)\right\vert dt \\
&&+\int_{\frac{1}{2}}^{1}\left\vert (1-t)(1-\lambda -t)\right\vert
\left\vert f^{\prime \prime }(ta+(1-t)b)\right\vert dt \\
&\leq &\left\{ \left\vert f^{\prime \prime }(a)\right\vert +\left\vert
f^{\prime \prime }(b)\right\vert \right\} \left[ \int_{0}^{\frac{1}{2}%
}t(\lambda -t)dt+\int_{\frac{1}{2}}^{1}(1-t)(t+\lambda -1)dt\right] \\
&=&\left\{ \left\vert f^{\prime \prime }(a)\right\vert +\left\vert f^{\prime
\prime }(b)\right\vert \right\} \left( \frac{\lambda }{4}-\frac{1}{12}%
\right) .
\end{eqnarray*}%
This is second inequality of (\ref{4}). This also completes the proof.
\end{proof}

\begin{theorem}
\label{yildiz}Let $f:I\subset 
\mathbb{R}
\rightarrow 
\mathbb{R}
$ be a differentiable mapping on $I^{o}$ , $a,b\in I$ with $a<b$. If $%
\left\vert f^{\prime \prime }\right\vert ^{q}$ is $P-$convex function$,$ $%
0\leq \lambda \leq 1$ and $q\geq 1,$ then the following inequality holds:%
\begin{eqnarray}
&&  \label{15} \\
&&\left\vert \left( \lambda -1\right) f\left( \frac{a+b}{2}\right) -\lambda 
\frac{f(a)+f(b)}{2}+\frac{1}{b-a}\int_{a}^{b}f(x)dx\right\vert  \notag \\
&\leq &\left\{ 
\begin{array}{cc}
\begin{array}{c}
\\ 
\frac{(b-a)^{2}}{48}\left( 8\lambda ^{3}-3\lambda +1\right) \left( \left\{
\left\vert f^{\prime \prime }(a)\right\vert ^{q}+\left\vert f^{\prime \prime
}(b)\right\vert ^{q}\right\} \right) ^{\frac{1}{q}},%
\end{array}
& for\text{ }0\leq \lambda \leq \frac{1}{2} \\ 
&  \\ 
\text{ \ \ \ \ \ }%
\begin{array}{c}
\frac{(b-a)^{2}}{48}\left( 3\lambda -1\right) \left( \left\{ \left\vert
f^{\prime \prime }(a)\right\vert ^{q}+\left\vert f^{\prime \prime
}(b)\right\vert ^{q}\right\} \right) ^{\frac{1}{q}} \\ 
\end{array}%
, & for\text{ }\frac{1}{2}\leq \lambda \leq 1%
\end{array}%
\right.  \notag
\end{eqnarray}
\end{theorem}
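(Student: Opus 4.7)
The plan is to upgrade the argument of Theorem \ref{cetin} from a pointwise bound on $\left\vert f^{\prime \prime }\right\vert $ to a weighted H\"{o}lder step on $\left\vert f^{\prime \prime }\right\vert ^{q}$. Starting from Lemma \ref{yildiz2} and taking absolute values inside the integral, I arrive at exactly the splitting (\ref{1}) of the left-hand side into two integrals $I_{1}$ and $I_{2}$, weighted respectively by $\left\vert t(t-\lambda )\right\vert $ on $[0,1/2]$ and by $\left\vert (1-t)(1-\lambda -t)\right\vert $ on $[1/2,1]$.

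Next I would apply the power-mean (weighted H\"{o}lder) inequality to each integral separately, keeping the kernel as the weight rather than as part of the integrand. Writing the integrand of $I_{1}$ as the product $\left\vert t(t-\lambda )\right\vert ^{1-1/q}\cdot \left\vert t(t-\lambda )\right\vert ^{1/q}\left\vert f^{\prime \prime }(ta+(1-t)b)\right\vert $ and using H\"{o}lder with conjugate exponents $q/(q-1)$ and $q$ gives
$$
I_{1}\leq \left( \int_{0}^{1/2}\left\vert t(t-\lambda )\right\vert dt\right) ^{1-1/q}\left( \int_{0}^{1/2}\left\vert t(t-\lambda )\right\vert \left\vert f^{\prime \prime }(ta+(1-t)b)\right\vert ^{q}dt\right) ^{1/q},
$$
and analogously for $I_{2}$. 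The hypothesis that $\left\vert f^{\prime \prime }\right\vert ^{q}$ is $P$-convex then lets me replace $\left\vert f^{\prime \prime }(ta+(1-t)b)\right\vert ^{q}$ by the constant $\left\vert f^{\prime \prime }(a)\right\vert ^{q}+\left\vert f^{\prime \prime }(b)\right\vert ^{q}$, which factors out of the inner integral and collapses the two factors of $\int \left\vert t(t-\lambda )\right\vert dt$ (with exponents $1-1/q$ and $1/q$) into a single copy.

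For the final bookkeeping I would reuse the elementary integral evaluations carried out in the proof of Theorem \ref{cetin}. For $0\leq \lambda \leq \frac{1}{2}$, each half-weight integrates to $\frac{\lambda ^{3}}{3}-\frac{\lambda }{8}+\frac{1}{24}$ (the second half matches the first via the substitution $u=1-t$), summing to $\frac{8\lambda ^{3}-3\lambda +1}{12}$. For $\frac{1}{2}\leq \lambda \leq 1$ the absolute values collapse and each half-weight integrates to $\frac{3\lambda -1}{24}$, summing to $\frac{3\lambda -1}{12}$. Multiplying through by the prefactor $\frac{(b-a)^{2}}{2}$ and pulling out the factor $\left( \left\vert f^{\prime \prime }(a)\right\vert ^{q}+\left\vert f^{\prime \prime }(b)\right\vert ^{q}\right) ^{1/q}$ then yields the stated two-case bound.

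No real obstacle is expected: this is a routine power-mean upgrade of Theorem \ref{cetin}. The one bit of care required is to attach the weight $\left\vert t(t-\lambda )\right\vert $ (and not the constant $1$) to $\left\vert f^{\prime \prime }\right\vert ^{q}$ inside H\"{o}lder's inequality, because this weighted form is exactly what restores the exponent $1-1/q$ of the kernel integral back to $1$ after the $P$-convex bound is applied, so the same geometric constants $8\lambda ^{3}-3\lambda +1$ and $3\lambda -1$ as in Theorem \ref{cetin} are recovered.
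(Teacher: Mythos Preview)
Your approach is exactly the paper's: start from Lemma~\ref{yildiz2}, apply the weighted power-mean (H\"older) inequality separately on $[0,\tfrac12]$ and $[\tfrac12,1]$ with the kernel $|t(t-\lambda)|$ (resp.\ $|(1-t)(1-\lambda-t)|$) as weight, invoke $P$-convexity of $|f''|^{q}$ to pull out $|f''(a)|^{q}+|f''(b)|^{q}$, and reuse the kernel integrals already computed in Theorem~\ref{cetin}.

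One arithmetic point you should not gloss over: your own numbers give
\[
\frac{(b-a)^{2}}{2}\cdot\frac{8\lambda^{3}-3\lambda+1}{12}
=\frac{(b-a)^{2}}{24}\,(8\lambda^{3}-3\lambda+1),
\qquad
\frac{(b-a)^{2}}{2}\cdot\frac{3\lambda-1}{12}
=\frac{(b-a)^{2}}{24}\,(3\lambda-1),
\]
not the $\tfrac{(b-a)^{2}}{48}$ printed in the statement. The paper's own computations (\ref{6})--(\ref{9}) and (\ref{11})--(\ref{13}) lead to the same $24$ (and for $q=1$ this is forced by consistency with Theorem~\ref{cetin}), so the factor of $2$ is a slip in the displayed constant of~(\ref{15}), not in your argument; just don't claim your output matches the stated $48$ without flagging this.
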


\begin{proof}
From Lemma 1 and using well known power mean inequality, we get%
\begin{eqnarray}
&&  \label{5} \\
&&\left\vert \left( \lambda -1\right) f\left( \frac{a+b}{2}\right) -\lambda 
\frac{f(a)+f(b)}{2}+\frac{1}{b-a}\int_{a}^{b}f(x)dx\right\vert \text{ } 
\notag \\
&\leq &\frac{(b-a)^{2}}{2}\left[ \int_{0}^{\frac{1}{2}}\left\vert
t(t-\lambda )\right\vert \left\vert f^{\prime \prime }(ta+(1-t)b)\right\vert
dt\right.  \notag \\
&&\left. +\int_{\frac{1}{2}}^{1}\left\vert (1-t)(1-\lambda -t)\right\vert
\left\vert f^{\prime \prime }(ta+(1-t)b)\right\vert dt\right]  \notag \\
&\leq &\frac{(b-a)^{2}}{2}\left( \int_{0}^{\frac{1}{2}}\left\vert
t(t-\lambda )\right\vert dt\right) ^{1-\frac{1}{q}}\left( \int_{0}^{\frac{1}{%
2}}\left\vert t(t-\lambda )\right\vert \left[ \left\vert f^{\prime \prime
}(ta+(1-t)b)\right\vert \right] ^{q}dt\right) ^{\frac{1}{q}}  \notag \\
&&+\left( \int_{\frac{1}{2}}^{1}\left\vert (1-t)(1-\lambda -t)\right\vert
dt\right) ^{1-\frac{1}{q}}\left( \int_{\frac{1}{2}}^{1}\left\vert
(1-t)(1-\lambda -t)\right\vert \left[ \left\vert f^{\prime \prime
}(ta+(1-t)b)\right\vert \right] ^{q}dt\right) ^{\frac{1}{q}}.  \notag
\end{eqnarray}%
Let $0\leq \lambda \leq \frac{1}{2}.$ Since $\left\vert f^{\prime \prime
}\right\vert $ is $P-$convex on [a,b], we write%
\begin{eqnarray}
&&  \label{6} \\
&&\int_{0}^{\frac{1}{2}}\left\vert t(t-\lambda )\right\vert \left[
\left\vert f^{\prime \prime }(ta+(1-t)b)\right\vert \right] ^{q}dt  \notag \\
&=&\int_{0}^{\lambda }t(\lambda -t)\left[ \left\vert f^{\prime \prime
}(ta+(1-t)b)\right\vert \right] ^{q}dt+\int_{\lambda }^{\frac{1}{2}%
}t(t-\lambda )\left[ \left\vert f^{\prime \prime }(ta+(1-t)b)\right\vert %
\right] ^{q}dt  \notag \\
&\leq &\left\{ \left\vert f^{\prime \prime }(a)\right\vert ^{q}+\left\vert
f^{\prime \prime }(b)\right\vert ^{q}\right\} \left[ \int_{0}^{\lambda
}t(\lambda -t)dt+\int_{\lambda }^{\frac{1}{2}}t(t-\lambda )dt\right]  \notag
\\
&=&\left\{ \left\vert f^{\prime \prime }(a)\right\vert ^{q}+\left\vert
f^{\prime \prime }(b)\right\vert ^{q}\right\} \left( \frac{\lambda ^{3}}{3}-%
\frac{\lambda }{8}+\frac{1}{24}\right) ,  \notag
\end{eqnarray}

\begin{eqnarray}
&&  \label{7} \\
&&\int_{\frac{1}{2}}^{1}\left\vert (1-t)(1-\lambda -t)\right\vert \left[
\left\vert f^{\prime \prime }(ta+(1-t)b)\right\vert \right] ^{q}dt  \notag \\
&=&\int_{\frac{1}{2}}^{1-\lambda }(1-t)(1-\lambda -t)\left[ \left\vert
f^{\prime \prime }(ta+(1-t)b)\right\vert \right] ^{q}dt  \notag \\
&&+\int_{1-\lambda }^{1}(1-t)(t+\lambda -1)\left[ \left\vert f^{\prime
\prime }(ta+(1-t)b)\right\vert \right] ^{q}dt  \notag \\
&\leq &\left\{ \left\vert f^{\prime \prime }(a)\right\vert ^{q}+\left\vert
f^{\prime \prime }(b)\right\vert ^{q}\right\} \left[ \int_{\frac{1}{2}%
}^{1-\lambda }(1-t)(1-\lambda -t)dt+\int_{1-\lambda }^{1}(1-t)(t+\lambda
-1)dt\right]  \notag \\
&=&\left\{ \left\vert f^{\prime \prime }(a)\right\vert ^{q}+\left\vert
f^{\prime \prime }(b)\right\vert ^{q}\right\} \left( \frac{2(1-\lambda )^{3}%
}{3}+\lambda (1-\lambda )^{2}+\frac{7\lambda }{8}-\frac{5}{8}\right) , 
\notag
\end{eqnarray}

\begin{equation}
\int_{0}^{\frac{1}{2}}\left\vert t(t-\lambda )\right\vert
dt=\int_{0}^{\lambda }t(\lambda -t)dt+\int_{\lambda }^{\frac{1}{2}%
}t(t-\lambda )dt=\frac{\lambda ^{3}}{3}+\frac{1-3\lambda }{24}  \label{8}
\end{equation}%
and%
\begin{equation}
\int_{\frac{1}{2}}^{1}\left\vert (1-t)(1-\lambda -t)\right\vert dt=\int_{%
\frac{1}{2}}^{1-\lambda }(1-t)(1-\lambda -t)dt+\int_{1-\lambda
}^{1}(1-t)(t+\lambda -1)dt=\frac{\lambda ^{3}}{3}+\frac{1-3\lambda }{24}.
\label{9}
\end{equation}%
Thus, using (\ref{6})-(\ref{9}) in (\ref{5}), we obtain the first inequality
of (\ref{15}).

Now, let $\frac{1}{2}\leq \lambda \leq 1,$ then, using the $P-$convexity of $%
\left\vert f^{\prime \prime }\right\vert ^{q}$, we have%
\begin{eqnarray}
&&\int_{0}^{\frac{1}{2}}\left\vert t(t-\lambda )\right\vert \left[
\left\vert f^{\prime \prime }(ta+(1-t)b)\right\vert \right] ^{q}dt
\label{11} \\
&=&\int_{0}^{\frac{1}{2}}t(\lambda -t)\left[ \left\vert f^{\prime \prime
}(ta+(1-t)b)\right\vert \right] ^{q}dt  \notag \\
&\leq &\int_{0}^{\frac{1}{2}}t(\lambda -t)\left\{ \left\vert f^{\prime
\prime }(a)\right\vert ^{q}+\left\vert f^{\prime \prime }(b)\right\vert
^{q}\right\} dt  \notag \\
&=&\left\{ \left\vert f^{\prime \prime }(a)\right\vert ^{q}+\left\vert
f^{\prime \prime }(b)\right\vert ^{q}\right\} \left( \frac{\lambda }{8}-%
\frac{1}{24}\right) ,  \notag
\end{eqnarray}%
similarly,%
\begin{eqnarray}
&&\int_{\frac{1}{2}}^{1}\left\vert (1-t)(1-\lambda -t)\right\vert \left[
\left\vert f^{\prime \prime }(ta+(1-t)b)\right\vert \right] ^{q}dt
\label{12} \\
&=&\int_{\frac{1}{2}}^{1}(1-t)(t+\lambda -1)\left[ \left\vert f^{\prime
\prime }(ta+(1-t)b)\right\vert \right] ^{q}dt  \notag \\
&\leq &\int_{\frac{1}{2}}^{1}(1-t)(t+\lambda -1)\left\{ \left\vert f^{\prime
\prime }(a)\right\vert ^{q}+\left\vert f^{\prime \prime }(b)\right\vert
^{q}\right\} dt  \notag \\
&=&\left\{ \left\vert f^{\prime \prime }(a)\right\vert ^{q}+\left\vert
f^{\prime \prime }(b)\right\vert ^{q}\right\} \left( \frac{\lambda }{8}-%
\frac{1}{24}\right) .  \notag
\end{eqnarray}%
We also have

\begin{equation}
\int_{0}^{\frac{1}{2}}\left\vert t(t-\lambda )\right\vert dt=\int_{\frac{1}{2%
}}^{1}\left\vert (1-t)(1-\lambda -t)\right\vert dt=\frac{3\lambda -1}{24}.
\label{13}
\end{equation}%
Therefore, if we use the (\ref{11}), (\ref{12}) and (\ref{13}) in (\ref{5}),
we obtain the second inequality of (\ref{15}). This completes the proof.
\end{proof}

\begin{corollary}
In Theorem \ref{yildiz}, if we choose $\lambda =0,$ we obtain 
\begin{equation}
\left\vert \frac{1}{b-a}\int_{a}^{b}f(x)dx-f\left( \frac{a+b}{2}\right)
\right\vert \leq \frac{\left( b-a\right) ^{2}}{48}\left( \left\{ \left\vert
f^{\prime \prime }(a)\right\vert ^{q}+\left\vert f^{\prime \prime
}(b)\right\vert ^{q}\right\} \right) ^{\frac{1}{q}}  \label{16}
\end{equation}%
which similar to the left hand side of H-H inequality.
\end{corollary}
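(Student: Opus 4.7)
The plan is to apply Theorem \ref{yildiz} directly as a specialization at $\lambda = 0$. Since $0$ lies in the range $0 \leq \lambda \leq \tfrac{1}{2}$, I would invoke the first branch of the bound (\ref{15}).

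First I would simplify the left-hand expression at $\lambda=0$: the coefficient $\lambda-1$ becomes $-1$ and the term $\lambda \tfrac{f(a)+f(b)}{2}$ vanishes, so
\[
\left|(\lambda-1) f\!\left(\tfrac{a+b}{2}\right) - \lambda\,\tfrac{f(a)+f(b)}{2} + \tfrac{1}{b-a}\int_a^b f(x)\,dx\right|
\]
collapses to $\left|\tfrac{1}{b-a}\int_a^b f(x)\,dx - f\!\left(\tfrac{a+b}{2}\right)\right|$, which is exactly the left-hand side of (\ref{16}).

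Next I would evaluate the polynomial factor on the right-hand side: $8\lambda^{3}-3\lambda+1$ at $\lambda=0$ equals $1$, so the prefactor $\tfrac{(b-a)^{2}}{48}(8\lambda^{3}-3\lambda+1)$ reduces to $\tfrac{(b-a)^{2}}{48}$. Combining this with the unchanged factor $\bigl(|f''(a)|^{q}+|f''(b)|^{q}\bigr)^{1/q}$ yields precisely the right-hand side of (\ref{16}).

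There is no substantive obstacle here: the corollary is a direct specialization, and the only thing to check is that $\lambda = 0$ indeed falls in the first branch of the piecewise estimate (it does, as an endpoint of the interval $[0,\tfrac{1}{2}]$). The work of bounding the two integrals using $P$-convexity and the power-mean inequality has already been carried out in the proof of Theorem \ref{yildiz}, so no further estimation is needed.
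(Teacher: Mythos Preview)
Your proposal is correct and matches the paper's approach exactly: the corollary is stated without a separate proof in the paper, being an immediate specialization of Theorem~\ref{yildiz} at $\lambda=0$, and your verification of the left-hand side and of $8\lambda^{3}-3\lambda+1\big|_{\lambda=0}=1$ is precisely what is needed.
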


\begin{corollary}
In Theorem \ref{yildiz} we choose $\lambda =1,$ we obtain 
\begin{equation}
\left\vert \frac{f(a)+f(b)}{2}-\frac{1}{b-a}\int_{a}^{b}f(x)dx\right\vert
\leq \frac{(b-a)^{2}}{24}\left( \left\{ \left\vert f^{\prime \prime
}(a)\right\vert ^{q}+\left\vert f^{\prime \prime }(b)\right\vert
^{q}\right\} \right) ^{\frac{1}{q}}  \label{17}
\end{equation}%
which similar to the right hand side of H-H inequality.
\end{corollary}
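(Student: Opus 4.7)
The plan is to derive this corollary as a direct specialization of Theorem \ref{yildiz} at the endpoint $\lambda = 1$, which falls in the second case $\tfrac{1}{2} \leq \lambda \leq 1$. So no new integral computations are required; everything reduces to plugging in $\lambda = 1$ and simplifying the two sides.

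First, I would look at the left-hand side of (\ref{15}). With $\lambda = 1$, the coefficient $(\lambda - 1)$ of $f\!\left(\tfrac{a+b}{2}\right)$ vanishes, so the expression inside the absolute value becomes
\[
-\frac{f(a)+f(b)}{2}+\frac{1}{b-a}\int_{a}^{b}f(x)\,dx,
\]
whose modulus is exactly $\left|\tfrac{f(a)+f(b)}{2}-\tfrac{1}{b-a}\int_a^b f(x)\,dx\right|$, matching the left-hand side of (\ref{17}).

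Next, I would evaluate the upper bound in the second branch of (\ref{15}) at $\lambda = 1$. The factor $3\lambda - 1$ becomes $2$, so the bound simplifies to
\[
\frac{(b-a)^{2}}{48}\cdot 2 \cdot \bigl(|f''(a)|^{q}+|f''(b)|^{q}\bigr)^{1/q} = \frac{(b-a)^{2}}{24}\bigl(|f''(a)|^{q}+|f''(b)|^{q}\bigr)^{1/q},
\]
which is precisely the right-hand side of (\ref{17}).

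There is essentially no obstacle here, since the corollary is a substitution. The only thing worth double-checking is that $\lambda = 1$ is a legitimate endpoint of the hypothesis $0 \leq \lambda \leq 1$ in Theorem \ref{yildiz} (it is), and the shape of the resulting inequality matches the right-hand side of the Hermite--Hadamard inequality, which justifies the concluding remark.
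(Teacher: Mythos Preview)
Your proposal is correct and matches the paper's approach exactly: the paper states this corollary without proof, treating it as an immediate substitution of $\lambda=1$ into the second branch of Theorem~\ref{yildiz}, which is precisely what you do.
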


\begin{corollary}
In Theorem \ref{yildiz}, if we choose $\lambda =\frac{1}{3},$ we obtain%
\begin{equation*}
\left\vert \frac{1}{3}\left[ \frac{f(a)+f(b)}{2}+2f\left( \frac{a+b}{2}%
\right) \right] -\frac{1}{b-a}\int_{a}^{b}f(x)dx\right\vert \leq \frac{%
(b-a)^{2}}{162}\left( \left\{ \left\vert f^{\prime \prime }(a)\right\vert
^{q}+\left\vert f^{\prime \prime }(b)\right\vert ^{q}\right\} \right) ^{%
\frac{1}{q}}
\end{equation*}%
which similar to the Simpson inequality.
\end{corollary}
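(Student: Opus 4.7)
The plan is straightforward: specialize Theorem \ref{yildiz} to the value $\lambda = \tfrac{1}{3}$, and verify that both sides simplify to the expressions displayed in the corollary. Since $\tfrac{1}{3} < \tfrac{1}{2}$, the relevant branch of the piecewise bound is the first one, so I would never need to invoke the $\frac{1}{2} \le \lambda \le 1$ estimate.

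First I would rewrite the left-hand side of the identity in Theorem \ref{yildiz}. With $\lambda = \tfrac{1}{3}$, the quantity $(\lambda - 1) f\!\left(\tfrac{a+b}{2}\right) - \lambda \tfrac{f(a)+f(b)}{2} + \tfrac{1}{b-a}\int_a^b f(x)\,dx$ becomes $-\tfrac{2}{3} f\!\left(\tfrac{a+b}{2}\right) - \tfrac{1}{3}\cdot \tfrac{f(a)+f(b)}{2} + \tfrac{1}{b-a}\int_a^b f(x)\,dx$. Taking absolute values and pulling out an overall sign, this is exactly $\left|\tfrac{1}{3}\!\left[\tfrac{f(a)+f(b)}{2} + 2 f\!\left(\tfrac{a+b}{2}\right)\right] - \tfrac{1}{b-a}\int_a^b f(x)\,dx\right|$, which is the Simpson-type left-hand side in the corollary.

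Next I would substitute $\lambda = \tfrac{1}{3}$ into the constant $\tfrac{(b-a)^2}{48}(8\lambda^3 - 3\lambda + 1)$. The factor $8\lambda^3 - 3\lambda + 1$ becomes $\tfrac{8}{27} - 1 + 1 = \tfrac{8}{27}$, so the coefficient reduces to $\tfrac{(b-a)^2}{48}\cdot \tfrac{8}{27} = \tfrac{(b-a)^2}{162}$, matching the corollary's right-hand side. Combining these two reductions with the bound from Theorem \ref{yildiz} finishes the proof.

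There is no real obstacle here beyond careful arithmetic; the substance is already contained in the $0 \le \lambda \le \tfrac{1}{2}$ branch of Theorem \ref{yildiz}. The only thing worth double-checking is the sign manipulation in the first step (since $\lambda - 1 = -\tfrac{2}{3}$ is negative, so the Simpson weights emerge only after taking the absolute value) and the cancellation $-3\lambda + 1 = 0$ at $\lambda = \tfrac{1}{3}$, which is what produces the clean denominator $162$.
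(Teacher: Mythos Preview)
Your proposal is correct and follows exactly the paper's approach: the corollary is stated in the paper without an explicit proof, being an immediate specialization of Theorem~\ref{yildiz} at $\lambda=\tfrac{1}{3}$, and your verification of both the left-hand side (Simpson combination) and the right-hand constant $\tfrac{(b-a)^2}{48}\cdot\tfrac{8}{27}=\tfrac{(b-a)^2}{162}$ is accurate.
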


Furthermore if $f^{\prime \prime }$ is bounded on $I=[a,b]$ then we have the
following corollary:

\begin{corollary}
In Corollary 1, if $\left\vert f^{\prime \prime }\right\vert \leq M,$ $M>0,$
then we have%
\begin{equation*}
\left\vert \frac{1}{b-a}\int_{a}^{b}f(x)dx-f\left( \frac{a+b}{2}\right)
\right\vert \leq M\frac{\left( b-a\right) ^{2}}{48}2^{\frac{1}{q}}.
\end{equation*}%
Since $2^{\frac{1}{q}}\leq 2$ for $q\geq 1,$ we obtain%
\begin{equation*}
\left\vert \frac{1}{b-a}\int_{a}^{b}f(x)dx-f\left( \frac{a+b}{2}\right)
\right\vert \leq M\frac{\left( b-a\right) ^{2}}{24}
\end{equation*}%
which is (\ref{21}) inequality.
\end{corollary}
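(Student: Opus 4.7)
The proof is a direct substitution into the bound furnished by Corollary 1, so the plan is very short. Starting from
\[
\left|\frac{1}{b-a}\int_a^b f(x)\,dx - f\!\left(\frac{a+b}{2}\right)\right| \leq \frac{(b-a)^2}{48}\Bigl(|f''(a)|^q + |f''(b)|^q\Bigr)^{1/q},
\]
I would invoke the hypothesis $|f''(x)| \leq M$ on $[a,b]$, applied at the two endpoints, to conclude $|f''(a)|^q + |f''(b)|^q \leq 2M^q$.

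Next, I would take the $1/q$-th power of this estimate, using monotonicity of $t \mapsto t^{1/q}$ on $[0,\infty)$, to obtain
\[
\Bigl(|f''(a)|^q + |f''(b)|^q\Bigr)^{1/q} \leq (2M^q)^{1/q} = 2^{1/q} M.
\]
Plugging this into Corollary 1 immediately yields the first displayed inequality
\[
\left|\frac{1}{b-a}\int_a^b f(x)\,dx - f\!\left(\frac{a+b}{2}\right)\right| \leq M\,\frac{(b-a)^2}{48}\,2^{1/q}.
\]

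For the second inequality, I would observe that since $q \geq 1$ we have $1/q \leq 1$, so $2^{1/q} \leq 2$, and therefore $\frac{(b-a)^2}{48}\cdot 2^{1/q} \leq \frac{(b-a)^2}{24}$. This produces the claimed upper bound $M(b-a)^2/24$, which matches the right-hand side of inequality (\ref{21}) from Cerone and Dragomir. There is no real obstacle here: the argument is a one-line majorization followed by the elementary inequality $2^{1/q} \leq 2$, and the entire proof should fit in two or three lines inside a \texttt{proof} environment.
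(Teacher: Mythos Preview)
Your proposal is correct and follows exactly the approach implicit in the paper's statement: bound $|f''(a)|^q+|f''(b)|^q$ by $2M^q$ via the hypothesis $|f''|\leq M$, take the $1/q$-th root to get the factor $2^{1/q}M$, and then use $2^{1/q}\leq 2$ for $q\geq 1$. The paper does not supply a separate proof of this corollary because the argument is precisely the substitution you describe.
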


\begin{corollary}
In Corollary 2, if $\left\vert f^{\prime \prime }\right\vert \leq M,$ $M>0,$
then we have%
\begin{equation*}
\left\vert \frac{f(a)+f(b)}{2}-\frac{1}{b-a}\int_{a}^{b}f(x)dx\right\vert
\leq M\frac{(b-a)^{2}}{24}2^{\frac{1}{q}}.
\end{equation*}%
Since $2^{\frac{1}{q}}\leq 2$ for $q\geq 1,$ we obtain 
\begin{equation*}
\left\vert \frac{f(a)+f(b)}{2}-\frac{1}{b-a}\int_{a}^{b}f(x)dx\right\vert
\leq M\frac{(b-a)^{2}}{12}
\end{equation*}%
which is (\ref{20}) inequality.
\end{corollary}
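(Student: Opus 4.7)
The plan is to apply Corollary 2 directly and substitute the uniform bound $\vert f''\vert \leq M$. Under this hypothesis one has $\vert f''(a)\vert^q \leq M^q$ and $\vert f''(b)\vert^q \leq M^q$, so that $\vert f''(a)\vert^q + \vert f''(b)\vert^q \leq 2M^q$. Taking the $q$-th root gives $(\vert f''(a)\vert^q + \vert f''(b)\vert^q)^{1/q} \leq 2^{1/q}M$, and inserting this into the right-hand side of (\ref{17}) yields the first displayed inequality.

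For the second inequality, I would simply note that on $[1,\infty)$ the map $q \mapsto 2^{1/q}$ is decreasing, with maximum value $2$ at $q = 1$. Consequently $2^{1/q} \leq 2$ for all $q \geq 1$, and multiplying the first bound by this estimate replaces the constant $1/24$ by $1/12$, giving the claimed form. To recognize the result as (\ref{20}), I would identify $K = M$ in the Dragomir--Pearce upper bound, since $(K/3)((b-a)/2)^2 = K(b-a)^2/12$.

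No step in this argument presents a genuine obstacle; the proof is essentially a two-line substitution followed by an elementary monotonicity estimate on $2^{1/q}$. The only mildly interesting point worth flagging is that the passage from the first bound to the second is not tight, with a loss factor of $2^{1-1/q}$ that vanishes only at $q = 1$. Accordingly, the first inequality is the sharper form, while the second is stated primarily to render the comparison with (\ref{20}) transparent.
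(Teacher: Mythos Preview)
Your proposal is correct and follows exactly the paper's approach: the paper gives no separate proof for this corollary, the argument being entirely contained in the statement itself (substitute the bound $|f''|\le M$ into (\ref{17}) and then use $2^{1/q}\le 2$). Your write-up simply makes explicit the obvious intermediate steps.
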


Now, we will discuss about which bounds better than the other.

\begin{case}
In Corollary 5, \i f we choose $K>M$, we obtain new upper bound better than (%
\ref{20}) inequality.
\end{case}

\begin{case}
In Corollary 5, \i f we choose $K=M,$ we have the same result with (\ref{20}%
) inequality.
\end{case}

\begin{case}
In Corollary 5, \i f we choose $K<M$, (\ref{20}) inequality is better than
our result.
\end{case}

\begin{corollary}
In Corollary 3, if $\left\vert f^{\prime \prime }\right\vert \leq M,$ $M>0,$
then we have%
\begin{equation*}
\left\vert \frac{1}{3}\left[ \frac{f(a)+f(b)}{2}+2f\left( \frac{a+b}{2}%
\right) \right] -\frac{1}{b-a}\int_{a}^{b}f(x)dx\right\vert \leq M\frac{%
(b-a)^{2}}{162}2^{\frac{1}{q}}.
\end{equation*}%
Since $2^{\frac{1}{q}}\leq 2$ for $q\geq 1,$ we obtain%
\begin{equation*}
\left\vert \frac{1}{3}\left[ \frac{f(a)+f(b)}{2}+2f\left( \frac{a+b}{2}%
\right) \right] -\frac{1}{b-a}\int_{a}^{b}f(x)dx\right\vert \leq M\frac{%
(b-a)^{2}}{81}.
\end{equation*}
\end{corollary}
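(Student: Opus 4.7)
The strategy is to invoke Corollary 3 directly and then bound the data-dependent factor on its right-hand side by the hypothesis $\left\vert f^{\prime\prime}\right\vert \le M$. Corollary 3 outputs the estimate
$$\left\vert \frac{1}{3}\left[ \frac{f(a)+f(b)}{2}+2f\!\left( \frac{a+b}{2}\right) \right] -\frac{1}{b-a}\int_{a}^{b}f(x)\,dx\right\vert \leq \frac{(b-a)^{2}}{162}\bigl(\left\vert f^{\prime\prime}(a)\right\vert^{q}+\left\vert f^{\prime\prime}(b)\right\vert^{q}\bigr)^{1/q},$$
so the entire task reduces to estimating the single factor $\bigl(\left\vert f^{\prime\prime}(a)\right\vert^{q}+\left\vert f^{\prime\prime}(b)\right\vert^{q}\bigr)^{1/q}$ in terms of $M$ and $q$.

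First I would apply the uniform bound pointwise at the two endpoints: since $\left\vert f^{\prime\prime}(a)\right\vert \le M$ and $\left\vert f^{\prime\prime}(b)\right\vert \le M$, raising to the $q$-th power and summing gives $\left\vert f^{\prime\prime}(a)\right\vert^{q}+\left\vert f^{\prime\prime}(b)\right\vert^{q} \le 2M^{q}$. Taking $q$-th roots (which is monotone on the positives) yields $\bigl(\left\vert f^{\prime\prime}(a)\right\vert^{q}+\left\vert f^{\prime\prime}(b)\right\vert^{q}\bigr)^{1/q} \le M\cdot 2^{1/q}$. Substituting this into Corollary 3 delivers the first displayed inequality.

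For the second displayed bound I would use the elementary monotonicity of the exponential $2^{x}$: since $q\ge 1$ forces $1/q\le 1$, we get $2^{1/q}\le 2^{1}=2$. Multiplying through the first bound gives $M\frac{(b-a)^{2}}{162}\cdot 2=M\frac{(b-a)^{2}}{81}$, which is the required estimate.

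There is no genuine obstacle here; the argument is a routine specialization of a previously established corollary to the setting of a uniformly bounded second derivative, and reduces to two one-line estimates. The only small point worth stating explicitly is the monotonicity $2^{1/q}\le 2$ for $q\ge 1$, which is immediate.
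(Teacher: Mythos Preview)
Your proposal is correct and follows exactly the approach implicit in the paper, which simply records the corollary without a separate proof: bound $\bigl(\left\vert f^{\prime\prime}(a)\right\vert^{q}+\left\vert f^{\prime\prime}(b)\right\vert^{q}\bigr)^{1/q}\le M\,2^{1/q}$ using $\left\vert f^{\prime\prime}\right\vert\le M$, then use $2^{1/q}\le 2$ for $q\ge 1$. This is precisely the pattern the paper uses for the parallel Corollaries~4 and~5.
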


\section{APPLICATIONS TO SPECIAL MEANS}

We now consider the means for arbitrary real numbers $\alpha ,\beta $ $%
(\alpha \neq \beta ).$ We take

\begin{enumerate}
\item $Arithmetic$ $mean:$%
\begin{equation*}
A(\alpha ,\beta )=\frac{\alpha +\beta }{2},\text{ \ }\alpha ,\beta \in 
\mathbb{R}
^{+}.
\end{equation*}

\item $Logarithmic$ $mean$:%
\begin{equation*}
L(\alpha ,\beta )=\frac{\alpha -\beta }{\ln \left\vert \alpha \right\vert
-\ln \left\vert \beta \right\vert },\text{ \ \ }\left\vert \alpha
\right\vert \neq \left\vert \beta \right\vert ,\text{ }\alpha ,\beta \neq 0,%
\text{ }\alpha ,\beta \in 
\mathbb{R}
^{+}.
\end{equation*}

\item $Generalized$ $log-mean$:%
\begin{equation*}
L_{n}(\alpha ,\beta )=\left[ \frac{\beta ^{n+1}-\alpha ^{n+1}}{(n+1)(\beta
-\alpha )}\right] ^{\frac{1}{n}},\text{ \ \ \ }n\in 
\mathbb{Z}
\backslash \{-1,0\},\text{ }\alpha ,\beta \in 
\mathbb{R}
^{+}.
\end{equation*}
\end{enumerate}

Now using the results of Section 2, we give some applications for special
means of real numbers.

\begin{proposition}
Let $a,b\in 
\mathbb{R}
,$ $0<a<b$ and $n\in 
\mathbb{Z}
,$ $\left\vert n(n-1)\right\vert \geq 3,$ then, for all $q\geq 1,$ the
following inequality holds:%
\begin{equation*}
\left\vert L_{n}^{n}(a,b)-A^{n}(a,b)\right\vert \leq \left\vert
n(n-1)\right\vert \frac{(b-a)^{2}}{48}\left( \left\{
a^{q(n-2)}+b^{q(n-2)}\right\} \right) ^{\frac{1}{q}}.
\end{equation*}
\end{proposition}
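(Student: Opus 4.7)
The plan is to apply Corollary 1 (inequality (\ref{16})) directly to the power function $f(x)=x^{n}$ on the interval $[a,b]$, and then recognize both sides as the claimed mean-theoretic quantities.

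First I would observe that for $n\in\mathbb{Z}\setminus\{-1,0\}$ we have
\[
\frac{1}{b-a}\int_{a}^{b}x^{n}\,dx=\frac{b^{n+1}-a^{n+1}}{(n+1)(b-a)}=L_{n}^{n}(a,b),
\]
while $f\bigl(\tfrac{a+b}{2}\bigr)=\bigl(\tfrac{a+b}{2}\bigr)^{n}=A^{n}(a,b)$. Thus the left-hand side of (\ref{16}) applied to $f(x)=x^{n}$ is precisely $\bigl|L_{n}^{n}(a,b)-A^{n}(a,b)\bigr|$.

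Next I need to check the hypothesis that $|f''|^{q}$ is $P$-convex on $[a,b]$. A short computation gives $f''(x)=n(n-1)x^{n-2}$, so
\[
|f''(x)|^{q}=|n(n-1)|^{q}\,x^{q(n-2)}.
\]
Because $0<a<b$, this is a (strictly) monotone power function on $[a,b]$: increasing if $q(n-2)>0$, decreasing if $q(n-2)<0$, and constant otherwise. In every case it is quasi-convex and nonnegative, hence belongs to $P([a,b])$ (recall the definition notes that $P(I)$ contains all nonnegative monotone quasi-convex functions). This is really the only non-mechanical step; one only needs to be comfortable that monotone power functions on $(0,\infty)$ fall into the $P$-class, and this is immediate.

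Finally I would substitute into (\ref{16}). We have $|f''(a)|^{q}+|f''(b)|^{q}=|n(n-1)|^{q}\bigl(a^{q(n-2)}+b^{q(n-2)}\bigr)$, so
\[
\bigl(|f''(a)|^{q}+|f''(b)|^{q}\bigr)^{1/q}=|n(n-1)|\bigl(a^{q(n-2)}+b^{q(n-2)}\bigr)^{1/q}.
\]
Plugging this into the right-hand side of (\ref{16}) yields
\[
\bigl|L_{n}^{n}(a,b)-A^{n}(a,b)\bigr|\leq |n(n-1)|\,\frac{(b-a)^{2}}{48}\bigl(a^{q(n-2)}+b^{q(n-2)}\bigr)^{1/q},
\]
which is the asserted inequality. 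The assumption $|n(n-1)|\geq 3$ plays no role in the derivation itself; it only ensures that the bound is nontrivial (the degenerate cases $n\in\{0,1,2\}$ give $f''\equiv 0$ or constant, and $n=-1$ is already excluded from the domain of $L_{n}$). No genuine obstacle arises: the proof is essentially a substitution into Corollary 1.
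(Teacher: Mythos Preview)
Your proposal is correct and follows essentially the same route as the paper: apply the $\lambda=0$ corollary of Theorem~\ref{yildiz} (inequality~(\ref{16})) to $f(x)=x^{n}$ on $[a,b]$ and identify the resulting quantities as $L_{n}^{n}(a,b)$ and $A^{n}(a,b)$. The paper's own proof cites ``Corollary~4'' but the bound it obtains is exactly that of Corollary~1; your identification of (\ref{16}) as the relevant input is in fact the accurate one.
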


\begin{proof}
The proof is obvious from Corollary 4 applied to the $P$-convex mapping $%
f(x)=x^{n},$ $x\in \lbrack a,b],$ $n\in 
\mathbb{Z}
$.
\end{proof}

\begin{proposition}
Let $a,b\in 
\mathbb{R}
,$ $0<a<b$ and $n\in 
\mathbb{Z}
,$ $\left\vert n(n-1)\right\vert \geq 3,$ then, for all $q\geq 1,$ the
following inequality holds:%
\begin{equation*}
\left\vert A(a^{n},b^{n})-L_{n}^{n}(a,b)\right\vert \leq \left\vert
n(n-1)\right\vert \frac{(b-a)^{2}}{24}\left( \left\{
a^{q(n-2)}+b^{q(n-2)}\right\} \right) ^{\frac{1}{q}}.
\end{equation*}
\end{proposition}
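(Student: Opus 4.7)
The approach is to specialize Corollary 2 to the power function $f(x) = x^n$ on $[a,b] \subset (0, \infty)$; three routine observations then convert the abstract bound into the claimed inequality between means.

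First I would verify the hypotheses of Corollary 2. Differentiability of $f$ on $(a,b)$ is immediate. Since $f''(x) = n(n-1)x^{n-2}$, we have $|f''(x)|^q = |n(n-1)|^q\, x^{q(n-2)}$, which is a nonnegative constant multiple of a monomial. On $(0,\infty)$ every monomial $x^m$ is monotone (increasing for $m \geq 0$, decreasing for $m < 0$) and nonnegative, and the definition of $P(I)$ explicitly states that this class contains all nonnegative monotone functions. Hence $|f''|^q \in P([a,b])$.

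Next I would match each ingredient of Corollary 2 to the notation of means. Direct computation gives
\[
\frac{f(a)+f(b)}{2} = A(a^n, b^n), \qquad \frac{1}{b-a}\int_a^b x^n\, dx = \frac{b^{n+1}-a^{n+1}}{(n+1)(b-a)} = L_n^n(a,b),
\]
so the absolute value on the left of Corollary 2 becomes $|A(a^n,b^n) - L_n^n(a,b)|$. For the right-hand side,
\[
\bigl(|f''(a)|^q + |f''(b)|^q\bigr)^{1/q} = |n(n-1)|\bigl(a^{q(n-2)} + b^{q(n-2)}\bigr)^{1/q}.
\]
Substituting these three identities into the estimate of Corollary 2 produces exactly the claimed inequality.

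There is no substantive obstacle here: the hypothesis $|n(n-1)| \geq 3$, i.e.\ $n \notin \{-1, 0, 1, 2\}$, serves only to exclude the degenerate values at which either $L_n$ is undefined ($n = -1, 0$) or $f''$ is too small for the bound to be of interest ($n = 1, 2$), and it plays no role in the verification above. The entire argument reduces to the observation that monomials on $(0,\infty)$ lie in the $P$-class together with a direct specialization of Corollary 2.
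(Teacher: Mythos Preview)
Your proposal is correct and is precisely the argument the paper has in mind: specialize the $\lambda=1$ corollary of Theorem~\ref{yildiz} to $f(x)=x^{n}$ and identify the resulting quantities with $A(a^{n},b^{n})$, $L_{n}^{n}(a,b)$, and $|n(n-1)|\bigl(a^{q(n-2)}+b^{q(n-2)}\bigr)^{1/q}$. The paper's citation of ``Corollary~6'' is a misnumbering (that corollary is the bounded Simpson-type estimate and cannot yield the stated bound); the result actually needed is Corollary~2, exactly as you invoke it.
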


\begin{proof}
The proof is obvious from Corollary 6 applied to the $P$-convex mapping $%
f(x)=x^{n},$ $x\in \lbrack a,b],$ $n\in 
\mathbb{Z}
$.
\end{proof}

\begin{proposition}
Let $a,b\in 
\mathbb{R}
,$ $0<a<b$ and $n\in 
\mathbb{Z}
,$ $\left\vert n(n-1)\right\vert \geq 3,$ then, for all $q\geq 1,$ the
following inequality holds:%
\begin{equation*}
\left\vert \frac{1}{3}A(a^{n},b^{n})+\frac{2}{3}A^{n}(a,b)-L_{n}^{n}(a,b)%
\right\vert \leq \left\vert n(n-1)\right\vert \frac{(b-a)^{2}}{162}\left(
\left\{ a^{q(n-2)}+b^{q(n-2)}\right\} \right) ^{\frac{1}{q}}.
\end{equation*}
\end{proposition}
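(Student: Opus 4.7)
The plan is to derive this Simpson-type mean inequality by a direct application of Corollary 3 (the $\lambda=\tfrac{1}{3}$ specialization of Theorem \ref{yildiz}) to the monomial $f(x)=x^{n}$ on $[a,b]$, in exact analogy with the proofs already given for Propositions 1 and 2 via Corollaries 4 and 6.

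The first step is to verify the hypotheses of Corollary 3. Because $n\in\mathbb{Z}$ and $0<a<b$, the function $f(x)=x^{n}$ is twice differentiable on $[a,b]$ with $f''(x)=n(n-1)x^{n-2}$, so
\[
|f''(x)|^{q}=|n(n-1)|^{q}\,x^{q(n-2)}
\]
is a nonnegative monomial in $x$, hence monotone on the positive interval $[a,b]$. Since $P([a,b])$ contains all nonnegative monotone functions, $|f''|^{q}$ is $P$-convex and Corollary 3 is directly applicable.

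The second step is to identify each piece of the inequality in Corollary 3 with the mean notation of Section 3. We have $\tfrac{f(a)+f(b)}{2}=A(a^{n},b^{n})$, $f\!\left(\tfrac{a+b}{2}\right)=A^{n}(a,b)$, and
\[
\frac{1}{b-a}\int_{a}^{b}x^{n}\,dx=\frac{b^{n+1}-a^{n+1}}{(n+1)(b-a)}=L_{n}^{n}(a,b),
\]
so the left-hand side of Corollary 3 collapses immediately to the quantity appearing on the left of the proposition.

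For the right-hand side I would factor out the common constant as $|f''(a)|^{q}+|f''(b)|^{q}=|n(n-1)|^{q}\bigl(a^{q(n-2)}+b^{q(n-2)}\bigr)$, pull $|n(n-1)|$ outside the $1/q$ power, and combine with the prefactor $\tfrac{(b-a)^{2}}{162}$ from Corollary 3 to recover the claimed bound. The assumption $|n(n-1)|\geq 3$ plays no role in the derivation itself; it merely rules out the degenerate exponents $n\in\{-1,0,1,2\}$ where $f''$ vanishes or is constant and the estimate becomes trivial. I do not expect any genuine obstacle, since essentially all of the analytic work is already encoded in Theorem \ref{yildiz} and its Corollary 3, and the proposition reduces to a mechanical specialization to the monomial $x^{n}$ together with the mean identifications above.
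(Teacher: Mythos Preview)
Your proposal is correct and matches the paper's own approach: the paper's proof simply says ``obvious from Corollary 8 applied to the $P$-convex mapping $f(x)=x^{n}$,'' which (modulo an evident numbering slip---there is no Corollary 8; the intended target is Corollary 3, the $\lambda=\tfrac{1}{3}$ specialization) is exactly the mechanical specialization you describe. Your write-up is simply a fleshed-out version of that one-line reference, including the explicit verification that $|f''|^{q}$ is $P$-convex and the identification of the three terms with $A(a^{n},b^{n})$, $A^{n}(a,b)$, and $L_{n}^{n}(a,b)$.
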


\begin{proof}
The proof is obvious from Corollary 8 applied to the $P$-convex mapping $%
f(x)=x^{n},$ $x\in \lbrack a,b],$ $n\in 
\mathbb{Z}
$.
\end{proof}

\end{document}